\newtheorem{theorem}{Theorem}[section]
\newtheorem{lemma}[theorem]{Lemma}
\newtheorem{corollary}[theorem]{Corollary}
\theoremstyle{definition}
\theoremstyle{remark}
\newtheorem{remark}[theorem]{Remark}
\numberwithin{equation}{section}
\begin{document}
\title[Myers' type theorem with the Bakry-\'Emery Ricci tensor]
{Myers' type theorem with the Bakry-\'Emery Ricci tensor}
\author{Jia-Yong Wu}
\address{Department of Mathematics, Shanghai Maritime University, 1550 Haigang Avenue, Shanghai 201306, P. R. China}\email{jywu81@yahoo.com}

\date{\today}
\subjclass[2000]{Primary 53C25; Secondary 53C20, 53C21}
\keywords{Bakry-\'{E}mery Ricci curvature; Ricci soliton; Myers' theorem}

\begin{abstract}
In this paper we prove a new Myers' type diameter estimate on a complete connected
Reimannian manifold which admits a bounded vector field such that the
Bakry-\'Emery Ricci tensor has a positive lower bound. The result is
sharper than previous Myers' type results. The proof uses the generalized mean
curvature comparison applied to the excess function instead of the classical second variation
of geodesics.
\end{abstract}
\maketitle

\section{Introduction}\label{Int1}
Let $(M,g)$ be an $n$-dimensional complete connected Riemannian manifold. For any
smooth vector field $V$ on $M$, the $m$-Bakry-\'Emery Ricci tensor \cite{[BE]} is
defined by
\[
\mathrm{Ric}^m_V:=\mathrm{Ric}+\frac 12\mathcal{L}_V g-\frac{1}{m}{V^*}\otimes{V^*}
\]
for some number $m>0$, where $\mathrm{Ric}$ is the Ricci tensor of $(M,g)$,
$\mathcal{L}_V$ denotes the Lie derivative in the direction of $V$, and $V^*$
is the metric-dual of $V$. When $m=\infty$, one denotes
\[
\mathrm{Ric}_V:=\mathrm{Ric}_V^\infty=\mathrm{Ric}+\frac 12\mathcal{L}_V g.
\]
In particular, if
\[
\mathrm{Ric}_V=\lambda\, g
\]
for some $\lambda\in\mathbb{R}$, then $(M, g)$ is a Ricci soliton, which is
a natural generalizations of Einstein metric and plays a fundamental role in
the formation of singularities of the Ricci flow \cite{[Ham]}. A Ricci soliton
is called expanding, steady or shrinking, if $\lambda<0$, $\lambda=0$, or
$\lambda>0$, respectively. When $V=\nabla f$ for some $f\in C^\infty(M)$,
the Ricci soliton becomes a gradient Ricci soliton:
\[
\mathrm{Ric}+\mathrm{Hess}\, f=\lambda\, g,
\]
where $\mathrm{Hess}$ is the Hessian of the metric $g$. By Perelman \cite{[Pere]},
any compact Ricci soliton is necessarily a gradient soliton; by Hamilton
\cite{[Ham]} and Ivey \cite{[Ivey]}, any compact gradient non-shrinking Ricci
soliton is Einstein. Hence, any compact non-shrinking Ricci soliton must be
Einstein. A compact shrinking Ricci soliton, when the dimension of
manifold is two or three, is also Einstein (see Hamilton \cite{[Ham]} and
Ivey \cite{[Ivey]}); but when $n\geq 4$, there exist nontrivial compact
gradient shrinking solitons (see Cao \cite{[Cao96]}, Koiso \cite{[Koi]},
Wang-Zhu \cite{[WaZh]}). On the other hand, there also exist many nontrivial
examples of noncompact Ricci solitons; see for instance the survey \cite{[Cao]}.

\vspace{.1in}

In \cite{[FG]}, Fern\'andez-L\'opez  and Garc\'ia-R\'io proved a Myers' type diameter
estimate when the Bakry-\'Emery Ricci tensor has a positive lower bound. That is, if
\[
\mathrm{Ric}_V \geq (n-1)H\,g
\]
for some real constant $H>0$, and $|V|\leq a$ for some real constant $a\ge0$, then
$M$ is compact. Later, Limoncu \cite{[Lim]} analyzed the index form of a minimal
unit speed geodesic, and gave a explicit upper bound to the diameter of $M$,
\[
\mathrm{diam}(M)\leq\frac{\pi}{(n-1)H}\left(\frac{a}{\sqrt{2}}+\sqrt{\frac{a^2}{2}+(n-1)^2H}\right).
\]
Furthermore, Tadano (see Theorem 2.1 in \cite{[Tad2]}) improved an inequality following
Limoncu's proof strategy, and sharpened the diameter estimate,
\begin{equation}\label{oldest1}
\mathrm{diam}(M)\leq\frac{1}{(n-1)H}\left(2a+\sqrt{4a^2+(n-1)^2H\pi^2}\right).
\end{equation}

\vspace{.1in}

In this paper, we take a different approach to get a sharper diameter
estimate than \eqref{oldest1}. Instead of using the second variation of a minimal
unit speed geodesic segment, we apply the generalized mean curvature comparison
to the excess function and get a new Myers' type theorem. We remark that the
excess function was also used by Petersen and Sprouse \cite{[PS]} to prove a
Myers' theorem under integral curvature bounds. Our proof is motivated by
the argument on the diameter estimate of smooth metric measure spaces
$(M,g,e^{-f}dv)$ in the case when $f$ is bounded \cite{[WW]}.

\begin{theorem}\label{Thm}
Let $(M,g)$ be an $n$-dimensional complete connected Riemannian manifold which
admits a smooth vector field $V$ satisfying
\[
\mathrm{Ric}_V \geq (n-1)H\,g
\]
for some real constant $H>0$, and $|V|\leq a$ for some real constant $a\ge0$.
Then $M$ is compact and the diameter satisfies
\begin{equation}\label{newest}
\mathrm{diam}(M)\, \leq\frac{2a}{(n-1)H}+\frac{\pi}{\sqrt{H}}.
\end{equation}
\end{theorem}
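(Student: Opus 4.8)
The plan is to argue by contradiction and to show that no unit-speed minimizing geodesic can have length exceeding $D:=\frac{2a}{(n-1)H}+\frac{\pi}{\sqrt H}$. Since $(M,g)$ is complete, every pair of points is joined by a minimizing geodesic, so this bound gives $\mathrm{diam}(M)\le D<\infty$, and a complete Riemannian manifold of finite diameter is compact; \eqref{newest} follows. So let $\gamma\colon[0,L]\to M$ be a unit-speed minimizing geodesic with $L>D$; since $a\ge 0$ we have $D\ge\pi/\sqrt H$, hence $L>\pi/\sqrt H$. Put $p=\gamma(0)$, $q=\gamma(L)$, $r_p=d(p,\cdot)$, $r_q=d(q,\cdot)$, and consider the excess function $e:=r_p+r_q-L\ge 0$, which vanishes identically along $\gamma$. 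For $0<t<L$ neither $p$ nor $q$ has $\gamma(t)$ in its cut locus, so $e$ is smooth near $\gamma(t)$, where it attains a global minimum; therefore $\Delta e(\gamma(t))\ge 0$, that is,
\[
\Delta r_p(\gamma(t))+\Delta r_q(\gamma(t))\ge 0,\qquad 0<t<L.
\]

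The heart of the argument is a generalized mean curvature comparison along $\gamma$. Write $m(t)=\Delta r_p(\gamma(t))$ and $v(t)=\langle V(\gamma(t)),\dot\gamma(t)\rangle$, so $|v|\le a$. Tracing the Riccati equation for the distance function and using Cauchy--Schwarz gives $m'+\frac{m^2}{n-1}\le-\mathrm{Ric}(\dot\gamma,\dot\gamma)$, and since $\mathrm{Ric}(\dot\gamma,\dot\gamma)=\mathrm{Ric}_V(\dot\gamma,\dot\gamma)-\langle\nabla_{\dot\gamma}V,\dot\gamma\rangle\ge(n-1)H-v'(t)$ we obtain
\[
(m-v)'+\frac{m^2}{n-1}\le-(n-1)H.
\]
Discarding the nonnegative term $m^2/(n-1)$ shows $m-v$ is nonincreasing with $(m-v)'\le-(n-1)H$. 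To bound $m-v$ from above near $t=0$ I would compare with the model functions $\phi_H(t)=\frac{1}{\sqrt H}\sin(\sqrt H t)$ and $m_H(t)=(n-1)\phi_H'/\phi_H=(n-1)\sqrt H\cot(\sqrt H t)$: a short computation from the displayed inequality gives $\frac{d}{dt}\bigl[(m-m_H)\phi_H^2\bigr]\le v'\phi_H^2$, while $(m-m_H)\phi_H^2\to 0$ as $t\to 0^+$, so integrating and integrating by parts,
\[
\bigl(m(t)-v(t)\bigr)\phi_H(t)^2\le m_H(t)\phi_H(t)^2-2\int_0^t v(s)\,\phi_H(s)\phi_H'(s)\,ds.
\]
Evaluated at $t_0:=\frac{\pi}{2\sqrt H}$, where $m_H(t_0)=0$, $\phi_H(t_0)=1/\sqrt H$ and $\phi_H'\ge 0$ on $[0,t_0]$, the integral is controlled by $\bigl|2\int_0^{t_0}v\,\phi_H\phi_H'\bigr|\le 2a\int_0^{t_0}\phi_H\phi_H'=a\,\phi_H(t_0)^2$, whence $m(t_0)-v(t_0)\le a$. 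Running the identical argument along $\gamma$ traversed backwards from $q$ and translating back shows that $\mu(t):=\Delta r_q(\gamma(t))$ satisfies: $\mu+v$ is nondecreasing with $(\mu+v)'\ge(n-1)H$, and $\mu(L-t_0)+v(L-t_0)\le a$.

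It remains to combine these facts. Since $L>\pi/\sqrt H$ we have $0<t_0<L-t_0<L$. Evaluating the excess inequality at $t=t_0$ and using $m(t_0)-v(t_0)\le a$ gives $\mu(t_0)+v(t_0)\ge-\bigl(m(t_0)-v(t_0)\bigr)\ge -a$, and then the monotonicity of $\mu+v$ yields
\[
a\ \ge\ \mu(L-t_0)+v(L-t_0)\ \ge\ \mu(t_0)+v(t_0)+(n-1)H\,(L-2t_0)\ \ge\ -a+(n-1)H\Bigl(L-\tfrac{\pi}{\sqrt H}\Bigr),
\]
so $(n-1)H\bigl(L-\pi/\sqrt H\bigr)\le 2a$, i.e. $L\le D$, contradicting $L>D$. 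Hence every minimizing geodesic has length at most $D$, and the theorem follows.

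The main obstacle is precisely the mean curvature comparison: the vector field $V$ enters the Riccati inequality only through $v'(t)$, and since $v$ carries nothing but a $C^0$ bound, a naive comparison with the constant-curvature model is unavailable. The device that rescues the argument — and produces the sharp constant — is to absorb $v$ into the comparison quantity, integrate the resulting term $v'\phi_H^2$ by parts against the model density $\phi_H^2$, and evaluate at exactly the zero $t_0=\pi/(2\sqrt H)$ of $m_H$; evaluating anywhere else loses a positive amount and degrades \eqref{newest}. Everything else is bookkeeping with the two monotone quantities $m-v$ and $\mu+v$ and the single inequality $\Delta r_p+\Delta r_q\ge 0$ furnished by the excess function.
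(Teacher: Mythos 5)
Your proposal is correct and follows essentially the same route as the paper: you use the excess function of the two endpoints together with a Bakry--\'Emery mean curvature comparison evaluated sharply at $t_0=\pi/(2\sqrt H)$, plus the linear decay $(m_V)'\le-(n-1)H$ to bridge the middle segment, which is exactly the combination of Theorems \ref{BasicMean} and \ref{meancomp} in the paper's proof (your evaluation points $\gamma(t_0)$ and $\gamma(L-t_0)$ are the paper's $y_1,y_2$). The only differences are cosmetic: you derive the two comparison estimates inline along the geodesic rather than quoting them as separate lemmas, use the ordinary Laplacian of the excess (equivalent to $\Delta_V$ there since the $V$-terms cancel along $\gamma$), and phrase the conclusion as a contradiction.
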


The bound assumption on $V$ is necessary as explained by Wei and Wylie (see
Examples 2.1 and 2.2 in \cite{[WW]}). It is easy to see that \eqref{newest}
is sharper than \eqref{oldest1} for any constants $H>0$ and $a>0$. When $a=0$,
Theorem \ref{Thm} recovers the classical Myers' compact theorem \cite{[Myer]}.
\begin{remark}
There have been other Myers' type theorems involving the Bakry-\'Emery Ricci
tensor under different conditions; see Bakry and Qian \cite{[BQ]}, Cavalcante,
Oliveira and Santos \cite{[COS]}, Li \cite{[Li]}, Limoncu \cite{[Lim2]}, Lott
\cite{[Lot]}, Mastrolia, Rimoldi and Veronelli \cite{[MRV]}, Morgan \cite{[Mor]},
Soylu \cite{[Soy]}, Tadano \cite{[Tad1]}, Wei and Wylie \cite{[WW]} and
Zhang \cite{[Zhang]} for details.
\end{remark}

For a fixed point $p\in M$, let $r(x):=d(x,p)$ be a distance function from $p$ to $x$.
In geodesic polar coordinates at $p$, let $\nabla r=\partial_r$. When $V=\partial_rf$
for some $f\in C^\infty(M)$, we have a Myers' type result under only a lower bound of
$\partial_rf$.
\begin{theorem}\label{Thm2}
Let $(M,g)$ be an $n$-dimensional complete connected Riemannian manifold which
admits a smooth function $f$ satisfying
\[
\mathrm{Ric}+\mathrm{Hess}\, f\ge (n-1)H\,g
\]
for some real constant $H>0$. If $\partial_r f\ge-a$ for some constant $a\geq 0$,
along a minimal geodesic segment from every point $x\in M$, then $M$ is compact and
the diameter satisfies
\[
\mathrm{diam}(M)\, \leq\frac{2a}{(n-1)H}+\frac{\pi}{\sqrt{H}}.
\]
\end{theorem}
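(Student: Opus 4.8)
The plan is to prove the diameter estimate $d(p,q)\le \frac{2a}{(n-1)H}+\frac{\pi}{\sqrt H}$ for an arbitrary pair of points $p,q\in M$; this forces $M$ to be bounded, hence compact by the Hopf--Rinow theorem, and yields the stated bound. The two ingredients are a \emph{generalized mean curvature (Laplacian) comparison} along minimal geodesics and its application to the \emph{excess function} $e(x)=d(x,p)+d(x,q)-d(p,q)$, following the scheme Wei and Wylie used in the bounded-$f$ case.

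\emph{The comparison lemma.} Fix $o\in M$, write $r=d(\cdot,o)$, and let $\gamma$ be a unit-speed minimal geodesic from $o$ along which $\partial_r f\ge -a$. Put $m(r)=\Delta r|_{\gamma(r)}$ and $m_f(r)=\Delta_f r|_{\gamma(r)}=m(r)-(f\circ\gamma)'(r)$. The Bochner formula together with $|\mathrm{Hess}\,r|^2\ge (\Delta r)^2/(n-1)$ gives $m'+\frac{m^2}{n-1}\le -\mathrm{Ric}(\gamma',\gamma')$; subtracting $(f\circ\gamma)''=\mathrm{Hess}\,f(\gamma',\gamma')$ and invoking $\mathrm{Ric}+\mathrm{Hess}\,f\ge (n-1)H\,g$ yields
\[
m_f'(r)\ \le\ -(n-1)H-\frac{\big(m_f(r)+(f\circ\gamma)'(r)\big)^2}{n-1}\ \le\ -(n-1)H .
\]
Thus $m_f$ is strictly decreasing; since $m_f(r)\to+\infty$ as $r\to 0^+$, it reaches the value $a$ at a unique radius $r_*$. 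On $(0,r_*)$ one has $m_f>a\ge 0$, so $m_f+(f\circ\gamma)'\ge m_f-a\ge 0$, and $v:=m_f-a$ obeys $v'\le -(n-1)H-\frac{v^2}{n-1}$ with $v(0^+)=+\infty$; the usual Riccati comparison with $(n-1)\sqrt H\cot(\sqrt H\,r)$ then gives $v(r)\le (n-1)\sqrt H\cot(\sqrt H\,r)$ on $(0,r_*)$, and since the right-hand side vanishes at $r=\pi/(2\sqrt H)$ we conclude $r_*\le \pi/(2\sqrt H)$. For $r\ge r_*$ we keep only $m_f'\le -(n-1)H$ and integrate, so that
\[
\Delta_f r\ \le\ a-(n-1)H\Big(r-\tfrac{\pi}{2\sqrt H}\Big)\qquad\text{for}\ \ r\ge \pi/(2\sqrt H)
\]
within the smooth range of $r$ along $\gamma$.

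\emph{The excess function.} Let $L=d(p,q)$. If $L<\pi/\sqrt H$ we are done since $a\ge 0$, so assume $L\ge \pi/\sqrt H$ and let $\gamma\colon[0,L]\to M$ be a minimal geodesic from $p$ to $q$ with midpoint $x_0=\gamma(L/2)$. As $\gamma$ is minimal on all of $[0,L]$, neither a conjugate point nor a second minimizing geodesic from $p$ (or from $q$) can occur at $x_0$, so $r_1:=d(\cdot,p)$, $r_2:=d(\cdot,q)$, and hence $e=r_1+r_2-L$, are smooth near $x_0$. Since $e\ge 0$ and $e(x_0)=0$, the point $x_0$ is a global minimum of $e$, whence $\nabla e(x_0)=0$ and $\Delta e(x_0)\ge 0$. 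Writing $\Delta r_i=\Delta_f r_i+\langle\nabla f,\nabla r_i\rangle$ and using $\langle\nabla f,\nabla r_1+\nabla r_2\rangle(x_0)=\langle\nabla f,\nabla e\rangle(x_0)=0$, we obtain $\Delta_f r_1(x_0)+\Delta_f r_2(x_0)\ge 0$. Now apply the comparison lemma to $\gamma|_{[0,L/2]}$ viewed from $p$ and to $\gamma|_{[L/2,L]}$ reversed, viewed from $q$ — both of length $L/2\ge \pi/(2\sqrt H)$, and along both $\partial_r f\ge -a$ by hypothesis — to get $\Delta_f r_i(x_0)\le a-(n-1)H\big(\tfrac L2-\tfrac{\pi}{2\sqrt H}\big)$ for $i=1,2$. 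Adding these and comparing with $\Delta_f r_1(x_0)+\Delta_f r_2(x_0)\ge 0$ gives $0\le 2a-(n-1)H\big(L-\pi/\sqrt H\big)$, i.e. $L\le \frac{2a}{(n-1)H}+\frac{\pi}{\sqrt H}$.

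The step requiring the most care is the comparison lemma past $r=\pi/(2\sqrt H)$: the one-sided bound $\partial_r f\ge -a$ only yields $m_f+(f\circ\gamma)'\ge m_f-a$, so once $m_f<a$ the quadratic Riccati term is no longer usable and one must fall back on the crude estimate $m_f'\le -(n-1)H$ — the decisive point being that $m_f$ has already dropped to $a$ by radius $\pi/(2\sqrt H)$, which is exactly where the extra additive $a$ in the final bound comes from. A secondary but necessary verification is that $x_0$ lies in the interior of the segment domains of $p$ and $q$ (otherwise one replaces $r_1,r_2$ by distances to nearby interior points and argues in the barrier sense, as is standard). Finally, the same computation with $\langle V,\nabla r\rangle$ in place of $\partial_r f$ and the weaker bound $|\langle V,\nabla r\rangle|\le a$ reproves Theorem \ref{Thm}; note that the present argument uses only the lower bound $\partial_r f\ge -a$.
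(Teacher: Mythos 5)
Your proof is correct, and its overall architecture --- a comparison for the generalized mean curvature $\Delta_f r$ applied to the excess function $e=r_1+r_2-d(p,q)$ --- is exactly the paper's scheme, which for this theorem simply reruns the proof of Theorem \ref{Thm} with Wei--Wylie's comparisons in place of Theorems \ref{BasicMean} and \ref{meancomp}. The genuine difference is how you reach the key estimate $\Delta_f r\le a$ at $r=\pi/(2\sqrt H)$: the paper (following Wei--Wylie) multiplies the Riccati inequality by $\mathrm{sn}_H^2$ and integrates by parts, using $(\mathrm{sn}_H^2)'\ge 0$ on $\bigl(0,\pi/(2\sqrt H)\bigr]$ together with $\partial_r f\ge -a$; you instead run a Riccati comparison on the shifted quantity $v=m_f-a$, noting that $m=m_f+\partial_r f\ge m_f-a\ge 0$ as long as $m_f>a$, so $v$ obeys the model inequality and must vanish by $\pi/(2\sqrt H)$. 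This is a clean, self-contained alternative that avoids the weighted integration-by-parts lemma; the only point to tighten is your appeal to ``the usual Riccati comparison'' with $v(0^+)=+\infty$, since the model solution also blows up at $0$ --- one should invoke the standard asymptotics $m(r)=(n-1)/r+O(r)$ (for instance by checking that $\mathrm{sn}_H^2\,(v-m_H)$ is nonincreasing and tends to $0$ as $r\to 0^+$), which is routine. The remaining difference is cosmetic: you evaluate at the midpoint of $\gamma$ and use the same linear-decay bound for both $\Delta_f r_1$ and $\Delta_f r_2$, whereas the paper evaluates at $y_2=\gamma\bigl(\pi/(2\sqrt H)+B\bigr)$, using the comparison at radius exactly $\pi/(2\sqrt H)$ for one distance and the linear decay for the other; both bookkeepings yield the same inequality $0\le 2a-(n-1)H\bigl(d(p,q)-\pi/\sqrt H\bigr)$. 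Your smoothness discussion at the midpoint (no cut point of $p$ or $q$ at an interior point of a minimizing segment, with the barrier-sense fallback) is also correct.
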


\begin{remark}
The condition ``every point $x\in M$" in Theorem \ref{Thm2} cannot weakened to
``a fixed point $x\in M$". A obvious counterexample is a Gaussian shrinking
Ricci soliton.
\end{remark}

In particular, for compact shrinking gradient Ricci solitons, Theorem
\ref{Thm} implies that
\begin{corollary}\label{cor}
Let $(M,g)$ be an $n$-dimensional compact connected Riemannian manifold which
admits a smooth function $f$ satisfying
\[
\mathrm{Ric}+\mathrm{Hess}\, f=\lambda\, g
\]
for some real constant $\lambda>0$. Then the diameter of $M$ satisfies
\[
\mathrm{diam}(M)\, \leq\frac{2\sqrt{R_{max}-R_{min}}}{\lambda}+\sqrt{\frac{n-1}{\lambda}}\,\,\pi,
\]
where $R_{max}$ and $R_{min}$ denote the maximum and minimum values of the scalar curvature on $(M,g)$,
respectively.
\end{corollary}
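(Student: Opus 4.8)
\emph{Proof plan.} The strategy is to apply Theorem \ref{Thm} to the gradient vector field $V=\nabla f$, so the whole task reduces to producing an admissible bound $a$ for $|\nabla f|$. The curvature hypothesis of Theorem \ref{Thm} is automatic: the soliton equation gives $\mathrm{Ric}_{\nabla f}=\mathrm{Ric}+\mathrm{Hess}\,f=\lambda g=(n-1)H\,g$ with $H:=\lambda/(n-1)>0$. Hence once we know $|\nabla f|\le a$, estimate \eqref{newest} reads
\[
\mathrm{diam}(M)\le\frac{2a}{(n-1)H}+\frac{\pi}{\sqrt H}=\frac{2a}{\lambda}+\sqrt{\frac{n-1}{\lambda}}\,\pi,
\]
which is exactly the asserted bound with $a=\sqrt{R_{max}-R_{min}}$.

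So the only real point is the a priori gradient estimate $|\nabla f|^{2}\le R_{max}-R_{min}$. For this I would invoke the classical Hamilton identity for gradient Ricci solitons. Taking the trace of the soliton equation gives $\Delta f=n\lambda-R$; differentiating the equation and using the contracted second Bianchi identity yields $\nabla R=2\,\mathrm{Ric}(\nabla f)$ and $\nabla|\nabla f|^{2}=2\lambda\nabla f-2\,\mathrm{Ric}(\nabla f)$, and combining these three facts shows $\nabla\bigl(R+|\nabla f|^{2}-2\lambda f\bigr)=0$, that is,
\[
R+|\nabla f|^{2}-2\lambda f\equiv C
\]
for some constant $C$.

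Now I would use compactness. Since $M$ is compact, $f$ attains its maximum at some point $p$, where $\nabla f(p)=0$; since $\lambda>0$, the function $R+|\nabla f|^{2}=C+2\lambda f$ attains its maximum at the same point $p$, and there its value equals $R(p)\le R_{max}$. Consequently, for every $x\in M$,
\[
|\nabla f|^{2}(x)=\bigl(R+|\nabla f|^{2}\bigr)(x)-R(x)\le R(p)-R(x)\le R_{max}-R_{min},
\]
so $|\nabla f|\le\sqrt{R_{max}-R_{min}}=:a$. Feeding $H=\lambda/(n-1)$ and this value of $a$ into \eqref{newest} completes the proof. The one genuinely substantive ingredient is the Hamilton identity together with the observation that compactness forces $R+|\nabla f|^{2}$ to attain its extrema at critical points of $f$; everything else is substitution.
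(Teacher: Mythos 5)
Your proposal is correct and follows the paper's route exactly: apply Theorem \ref{Thm} with $V=\nabla f$, $(n-1)H=\lambda$ and $a=\sqrt{R_{max}-R_{min}}$, which turns \eqref{newest} into the stated bound. The only difference is that where the paper quotes the gradient estimate $|\nabla f|\le\sqrt{R_{max}-R}$ as Lemma \ref{lem} (from Fern\'andez-L\'opez and Garc\'ia-R\'io), you reprove it on the spot via Hamilton's identity $R+|\nabla f|^{2}-2\lambda f=\mathrm{const}$ and evaluation at a maximum point of $f$; that derivation is correct and makes the argument self-contained.
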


Recall that Tadano gave a diameter estimate on compact connected
shrinking Ricci solitons (Theorem 1.2 in \cite{[Tad2]})
\[
\mathrm{diam}(M)\, \leq\frac{2\sqrt{R_{max}-R_{min}}}{\lambda}
+\frac{1}{\lambda}\sqrt{4(R_{max}-R_{min})+(n-1)\lambda\pi^2}.
\]
Al\'{\i}as et al. proved this type inequality in a more general
setting (see Theorem 8.7 in \cite{[AMR]}). Obviously, our estimate is sharper than
their result.

\vspace{.1in}

The rest of this paper is organized as follows. In Section \ref{sec2}, we prove
two mean curvature comparisons for $\mathrm{Ric}_V$. One is general (see Theorem
\ref{BasicMean}), requiring no assumptions on the vector field $V$. The other
has a stronger conclusion (see Theorem \ref{meancomp}), requiring a bound of $V$.
In Section \ref{sec3}, we apply a argument of Wei and Wylie \cite{[WW]} to prove
Theorems \ref{Thm} and \ref{Thm2}. The proof uses Theorems \ref{BasicMean}
and \ref{meancomp} to the excess function. In Section \ref{sec4}, we apply Theorem
\ref{Thm} and properties of compact shrinking Ricci solitons to prove Corollary
\ref{cor}.

\vspace{.1in}

\textbf{Acknowledgement}.
The author would like to thank anonymous referees for pointing out
many expression errors and give many valuable suggestions that helped
to improve the presentation of the paper. This work is supported by the
NSFC (11671141) and the Natural Science Foundation of Shanghai (17ZR1412800).


\section{Mean curvature comparison for $\mathrm{Ric}_V$}\label{sec2}
Let $(M,g)$ be an $n$-dimensional complete connected Riemannian manifold. For a
fixed point $p\in M$, let $r(x):=d(x,p)$ be a distance function from $p$ to $x$.
Then $r(x)$ is smooth for all $x\in M\setminus\{p, C_p\}$, where $C_p$ denotes
the cut locus of the point $p$. In geodesic polar coordinates at $p$, we have
$\nabla r=\partial r$. It also satisfies $|\nabla r|=1$ where it is smooth.

Let $m(r)$ denote the mean curvature of the geodesic sphere at $p$ in the outer
normal direction. Then we have $m(r)=\Delta r$, where $\Delta$ is the Laplace
operator of $g$ (see \cite{[zhu]}). The classical mean curvature comparison
states that if
\[
\mathrm{Ric}\geq (n-1)H\,g,
\]
for some real constant $H$, then
\[
m(r)\leq m_H
\]
outside of the cut locus of $p$, where $m_H$ is the mean curvature of the geodesic
sphere in the model space $M_H^n$, the $n$-dimensional simply connected space with
constant sectional curvature $H$.

For any given smooth vector field
$V$ on $(M,g)$, we consider a diffusion operator instead of the usual Laplace operator
\[
\Delta_V:=\Delta-\langle V,\nabla\!\ \rangle,
\]
where $\langle\,,\,\rangle$ denotes the inner product with respect to the metric $g$.
Meanwhile the generalized mean curvature $m_V(r)$ associated to $V$ is defined by
\[
m_V(r):=m(r)-\langle V,\nabla r \rangle.
\]
Then we have $m_V(r)=\Delta_V(r)$.

\vspace{.1in}

In the following, we first give a rough estimate on $m_V$, requiring no assumptions
on $V$, which will be very useful in the proof of our main result. When we take
$V=\nabla f$ for some $f\in C^\infty(M)$, our result returns to the weighted
case considered by Wei and Wylie \cite{[WW]}.
\begin{theorem}\label{BasicMean}
Let $(M,g)$ be an $n$-dimensional complete Riemannian manifold. Assume that
$(M,g)$ admits a smooth vector field $V$ satisfying
\[
\mathrm{Ric}_V(\partial r,\partial r)\ge\lambda,\quad \lambda\in\mathbb{R}.
\]
Then given any minimal geodesic segment and $r_0 >0$,
\[
m_V(r)\le m_V(r_0)-\lambda (r-r_0) \ \  \mbox{for}\ r \ge r_0.
\]
Equality holds for some $r>r_0$ if and only if  all the radial sectional curvatures are zero,
$\mathrm{Hess}\, r\equiv 0$, and $\langle\nabla_{\partial r}V,\nabla r\rangle\equiv \lambda$ along
the geodesic from $r_0$ to $r$.
 \end{theorem}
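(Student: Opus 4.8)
The plan is to start from the classical Riccati (Bochner-type) equation for the mean curvature $m(r)=\Delta r$ along a fixed minimal unit-speed geodesic $\gamma$ emanating from $p$. Recall that along such a geodesic one has
\[
m'(r)=-\frac{|\mathrm{Hess}\, r|^2}{1}-\mathrm{Ric}(\partial r,\partial r)\le -\frac{m(r)^2}{n-1}-\mathrm{Ric}(\partial r,\partial r),
\]
where the inequality uses Cauchy--Schwarz on the $(n-1)$-dimensional restriction of $\mathrm{Hess}\, r$ to the orthogonal complement of $\partial r$ (note $\mathrm{Hess}\, r(\partial r,\cdot)=0$). The key observation is that when we pass to the $V$-modified quantity $m_V(r)=m(r)-\langle V,\partial r\rangle$, the term $-\langle V,\partial r\rangle$ interacts with $\tfrac12\mathcal{L}_V g(\partial r,\partial r)=\langle \nabla_{\partial r}V,\partial r\rangle=\partial_r\langle V,\partial r\rangle$ (the last equality because $\nabla_{\partial r}\partial r=0$). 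So differentiating, $m_V'(r)=m'(r)-\langle\nabla_{\partial r}V,\partial r\rangle\le -\mathrm{Ric}(\partial r,\partial r)-\langle\nabla_{\partial r}V,\partial r\rangle=-\mathrm{Ric}_V(\partial r,\partial r)\le -\lambda$, where I have simply thrown away the nonpositive $-m(r)^2/(n-1)$ term. This already gives $m_V'(r)\le-\lambda$, and integrating from $r_0$ to $r$ yields exactly $m_V(r)\le m_V(r_0)-\lambda(r-r_0)$.

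The steps in order would be: (1) fix a minimal geodesic segment and work on the open interval where $r$ is smooth; (2) write the Riccati inequality $m'(r)\le -\frac{m(r)^2}{n-1}-\mathrm{Ric}(\partial r,\partial r)$, citing the standard Bochner formula $\tfrac12\Delta|\nabla r|^2=|\mathrm{Hess}\,r|^2+\langle\nabla r,\nabla\Delta r\rangle+\mathrm{Ric}(\nabla r,\nabla r)$ together with $|\mathrm{Hess}\,r|^2\ge \frac{(\Delta r)^2}{n-1}$; (3) compute $\partial_r\langle V,\partial r\rangle=\langle\nabla_{\partial r}V,\partial r\rangle$ and recognize $\tfrac12(\mathcal{L}_V g)(\partial r,\partial r)=\langle\nabla_{\partial r}V,\partial r\rangle$; (4) add $-\partial_r\langle V,\partial r\rangle$ to both sides of the Riccati inequality to obtain $m_V'(r)\le -\frac{m(r)^2}{n-1}-\mathrm{Ric}_V(\partial r,\partial r)\le -\lambda$; (5) integrate. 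One small technical point worth a sentence: since $m(r)\to+\infty$ as $r\to 0^+$ but $m_V$ behaves the same way (as $V$ is bounded near $p$ and the extra term is bounded), the comparison at an interior base point $r_0>0$ is unproblematic; there is no need to start the integration at $0$.

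For the equality case, track exactly which inequalities were used: equality in $m_V'(r)\le-\lambda$ on an interval forces (a) $\mathrm{Ric}_V(\partial r,\partial r)=\lambda$ pointwise there, and (b) $|\mathrm{Hess}\,r|^2=\frac{(\Delta r)^2}{n-1}$ with the common value being such that the discarded term $-\frac{m(r)^2}{n-1}$ actually vanishes — i.e. $m(r)\equiv 0$, hence $\mathrm{Hess}\,r\equiv 0$ (Cauchy--Schwarz equality plus trace zero). Once $\mathrm{Hess}\,r\equiv0$, the Riccati equation gives $\mathrm{Ric}(\partial r,\partial r)=0$, and since $\mathrm{Hess}\,r\equiv0$ means the radial Jacobi fields are parallel, all radial sectional curvatures vanish. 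Combined with $\mathrm{Ric}_V(\partial r,\partial r)=\lambda$ and $\mathrm{Ric}(\partial r,\partial r)=0$ this forces $\langle\nabla_{\partial r}V,\partial r\rangle\equiv\lambda$. Conversely these three conditions clearly give equality.

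I expect the only real subtlety to be bookkeeping in the equality discussion — making sure that vanishing of the discarded term $m(r)^2/(n-1)$ is genuinely forced (it is, because that term is $\le 0$ and its omission is the sole gap between $m_V'(r)$ and $-\mathrm{Ric}_V(\partial r,\partial r)$ once the Cauchy--Schwarz step is also tight) and then correctly deducing $\mathrm{Hess}\,r\equiv 0$ rather than merely $\mathrm{tr}\,\mathrm{Hess}\,r=0$. The derivation of the inequality itself is routine once the identity $\tfrac12(\mathcal{L}_V g)(\partial r,\partial r)=\langle\nabla_{\partial r}V,\partial r\rangle=\partial_r\langle V,\partial r\rangle$ is in hand.
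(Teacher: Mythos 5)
Your proposal is correct and follows essentially the same route as the paper: the Bochner formula for the distance function gives the Riccati inequality, the identity $\tfrac12(\mathcal{L}_Vg)(\partial r,\partial r)=\langle\nabla_{\partial r}V,\partial r\rangle$ turns it into $m_V'\le-\mathrm{Ric}_V(\partial r,\partial r)\le-\lambda$, and integration yields the estimate. Your equality analysis (tightness forces $m\equiv0$, hence $\mathrm{Hess}\,r\equiv0$, $\mathrm{Ric}(\partial r,\partial r)=0$, vanishing radial curvatures, and $\langle\nabla_{\partial r}V,\partial r\rangle\equiv\lambda$) is the same as the paper's, just spelled out slightly more carefully.
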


\begin{proof}
Recall that the Bochner-Weitzenb\"ock formula
\[
\frac{1}{2}\Delta|\nabla u|^2
=|\mathrm{Hess}\,u|^2+\langle\nabla\Delta u,\nabla u\rangle
+{\rm Ric}(\nabla u,\nabla u)
\]
for any $u\in C^\infty(M)$. Letting $u(x)=r(x)$ and using $|\nabla r|=1$,
the above formula becomes
\begin{equation}\label{Bochdis}
0=|\mathrm{Hess}\,r|^2+\frac{\partial}{\partial r}\left(\Delta r\right)+{\rm Ric}(\nabla r,\nabla r).
\end{equation}
Here, $\mathrm{Hess}\,r$ is the second fundamental form of the geodesic sphere and
$\Delta r=m(r)$. We apply the Cauchy-Schwarz inequality to \eqref{Bochdis} and
obtain a Riccati inequality
\begin{equation}\label{Ricca}
m'\leq -\frac{m^2}{n-1}-{\rm Ric}(\partial r,\partial r).
\end{equation}
Notice that the Riccati inequality becomes equality if and only if the radial
sectional curvatures are constant. So the mean curvature $m_H$ of the model space
$M_H^n$ satisfies
\begin{equation}\label{inde}
m_H'=-\frac{m_H^2}{n-1}-(n-1)H.
\end{equation}
Noticing that
\begin{equation*}
\begin{split}
m_V'(r)&=m'(r)-\langle \nabla_{\partial r}V,\nabla r \rangle\\
&=m'(r)-\frac 12\mathcal{L}_V g(\partial r,\partial r),
\end{split}
\end{equation*}
from \eqref{Ricca} we have
\begin{equation}\label{recatti}
m'_V\leq -\frac{m^2}{n-1}-{\rm Ric}_V(\partial r,\partial r).
\end{equation}
By the assumption ${\rm Ric}_V(\partial r,\partial r)\geq \lambda$, then
\[
m'_V\leq-\lambda,
\]
which implies the inequality of Theorem \ref{BasicMean}.

Now we discuss the equality case. Assume that $m'_V\equiv-\lambda$
on an interval $[r_0, r]$. From \eqref{recatti} we have $m\equiv 0$  and
\[
m'_V=-\frac 12\mathcal{L}_V g(\partial r,\partial r)=-{\rm Ric}_V(\partial r,\partial r)=-\lambda.
\]
So we get ${\rm Ric}(\partial r,\partial r)=0$. Then by \eqref{Bochdis}, we further have
$\mathrm{Hess}\, r=0$, which implies the sectional curvatures must be zero.
\end{proof}

If the smooth vector field $V$ is bounded, we have a stronger comparison result, which
is a generalization of the Wei-Wylie comparison result (see Theorem 1.1 (a) in
\cite{[WW]}).
\begin{theorem}\label{meancomp}
Let $(M,g)$ be an $n$-dimensional complete Riemannian manifold. Assume that
$(M,g)$ admits a smooth vector field $V$ satisfying
\[
\mathrm{Ric}_V(\partial_r,\partial_r)\ge(n-1)H,\quad H\in\mathbb{R},
\]
along a minimal geodesic segment from a fixed point $p$ and $|V|\le a$
for some real constant $a\ge0$ (when $H>0$ assume $r\le\pi/2\sqrt{H}$). Then,
\[
m_V(r)-m_H(r)\le a
\]
along that minimal geodesic segment from $p$. Equality holds if and only if
the radial sectional curvatures are equal to $H$ and $V=-a\,\nabla r$.
\end{theorem}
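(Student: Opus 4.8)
The plan is to compare the Riccati-type inequality for $m_V$ with the model equation for $m_H$ and integrate the difference, but the key structural trick is to absorb the bounded term $\langle V,\nabla r\rangle$ into a "modified" mean curvature and use the sign of $m_H$ on the relevant interval. Concretely, write $m_V(r)=m(r)-\langle V,\nabla r\rangle$ and recall from the proof of Theorem \ref{BasicMean} that, using Bochner and Cauchy--Schwarz together with the hypothesis $\mathrm{Ric}_V(\partial_r,\partial_r)\ge (n-1)H$,
\[
m'(r)\le -\frac{m(r)^2}{n-1}-\mathrm{Ric}_V(\partial_r,\partial_r)+\langle\nabla_{\partial_r}V,\nabla r\rangle\le -\frac{m(r)^2}{n-1}-(n-1)H+\langle\nabla_{\partial_r}V,\nabla r\rangle.
\]
Subtracting \eqref{inde} and setting $\phi:=m-m_H$, one gets $\phi'\le -\frac{(m+m_H)}{n-1}\phi+\langle\nabla_{\partial_r}V,\nabla r\rangle$. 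I would then multiply by an integrating factor built from $\int (m+m_H)$; a cleaner route, following Wei--Wylie, is to multiply the Riccati inequality for $m$ by a suitable power of the model warping function $\mathrm{sn}_H$ (the solution of $\mathrm{sn}_H''+H\,\mathrm{sn}_H=0$ with $\mathrm{sn}_H(0)=0$, $\mathrm{sn}_H'(0)=1$), so that $m_H=(n-1)\mathrm{sn}_H'/\mathrm{sn}_H$ and the quadratic terms telescope.

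The main computation: following the weighted comparison, one shows
\[
\big(\mathrm{sn}_H^{2}(m-m_H)\big)'(r)\ \le\ \mathrm{sn}_H^{2}(r)\,\langle\nabla_{\partial_r}V,\nabla r\rangle(r).
\]
Now integrate from $0$ to $r$. Near $0$, $\mathrm{sn}_H^2(m-m_H)\to 0$ (both $m$ and $m_H$ blow up like $(n-1)/r$ and cancel to the needed order), so the boundary term at $0$ vanishes. On the right-hand side, integrate by parts to move the derivative off $V$:
\[
\int_0^r \mathrm{sn}_H^{2}\,\langle\nabla_{\partial_r}V,\nabla r\rangle
=\Big[\mathrm{sn}_H^{2}\,\langle V,\nabla r\rangle\Big]_0^r-\int_0^r 2\,\mathrm{sn}_H\,\mathrm{sn}_H'\,\langle V,\nabla r\rangle.
\]
This yields
\[
\mathrm{sn}_H^{2}(r)\,(m-m_H)(r)\ \le\ \mathrm{sn}_H^{2}(r)\,\langle V,\nabla r\rangle(r)-2\int_0^r \mathrm{sn}_H\,\mathrm{sn}_H'\,\langle V,\nabla r\rangle,
\]
hence
\[
\mathrm{sn}_H^{2}(r)\,\big(m_V-m_H\big)(r)=\mathrm{sn}_H^{2}(r)\big((m-m_H)(r)-\langle V,\nabla r\rangle(r)\big)\ \le\ -2\int_0^r \mathrm{sn}_H\,\mathrm{sn}_H'\,\langle V,\nabla r\rangle.
\]
Using $|\langle V,\nabla r\rangle|\le |V|\le a$ and the fact that $\mathrm{sn}_H\,\mathrm{sn}_H'\ge 0$ on $[0,\pi/2\sqrt H]$ (this is exactly where the restriction $r\le \pi/2\sqrt H$ is needed when $H>0$), we bound
\[
-2\int_0^r \mathrm{sn}_H\,\mathrm{sn}_H'\,\langle V,\nabla r\rangle\ \le\ 2a\int_0^r \mathrm{sn}_H\,\mathrm{sn}_H'=a\,\mathrm{sn}_H^{2}(r).
\]
Dividing by $\mathrm{sn}_H^{2}(r)>0$ gives $m_V(r)-m_H(r)\le a$, as claimed.

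For the equality case, I would track when each inequality above is tight. Equality in Cauchy--Schwarz in the Bochner formula forces $\mathrm{Hess}\,r=\frac{m}{n-1}(g-dr\otimes dr)$, i.e. the radial sectional curvatures equal $H$ and $m=m_H$; equality $|\langle V,\nabla r\rangle|=a$ with the correct sign throughout forces $\langle V,\nabla r\rangle\equiv -a$, and combined with $|V|\le a$ this forces $V=-a\,\nabla r$ along the segment. Conversely one checks these conditions make every step an equality. The main obstacle is purely technical: justifying the behavior of $\mathrm{sn}_H^{2}(m-m_H)$ as $r\to 0^+$ and handling the possible non-smoothness of $r$ past the cut locus, but since we work along a fixed minimal geodesic segment (where $r$ is smooth) this is the standard setup, so the real content is the integration-by-parts identity and the sign of $\mathrm{sn}_H\,\mathrm{sn}_H'$.
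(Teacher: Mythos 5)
Your proposal is correct and follows essentially the same route as the paper: derive the Riccati-type inequality from Bochner, multiply by $\mathrm{sn}_H^{2}$ so the quadratic terms telescope, integrate by parts to transfer the derivative onto $\mathrm{sn}_H^{2}$, and use $|\langle V,\nabla r\rangle|\le a$ together with $\bigl(\mathrm{sn}_H^{2}\bigr)'\ge 0$ on $[0,\pi/2\sqrt{H}]$, with the equality case handled by forcing $V=-a\,\nabla r$ and reducing to the classical comparison rigidity. No substantive differences from the paper's argument.
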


\begin{proof}
Combining \eqref{Ricca} and  \eqref{inde}, and using the assumption on
$\mathrm{Ric}_V$, we have
\begin{equation}
\begin{split}\label{diffRic}
(m-m_H)'&\le-\frac{m^2-m^2_H}{n-1}+\frac 12\mathcal{L}_V g(\partial r,\partial r)\\
&=-\frac{m^2-m^2_H}{n-1}+\langle \nabla_{\partial r}V,\nabla r \rangle.
\end{split}
\end{equation}
To simply the above inequality, we introduce a new function $\mathrm{sn}_H(r)$, which
be the solution to
\[
\mathrm{sn}_H''+H\mathrm{sn}_H=0
\]
such that $\mathrm{sn}_H(0)=0$ and $\mathrm{sn}_H'(0)=1$. Note that
\[
m_H=(n-1)\frac{\mathrm{sn}'_H}{\mathrm{sn}_H}.
\]
Now using this function and the inequality \eqref{diffRic}, we compute that
\begin{equation*}
\begin{split}
\left[\mathrm{sn}^2_H(m-m_H)\right]'&=2\mathrm{sn}_H\mathrm{sn}'_H(m-m_H)+\mathrm{sn}^2_H(m-m_H)'\\
&\le\mathrm{sn}^2_H\left[\frac{2m_H}{n-1}(m-m_H)-\frac{m^2-m^2_H}{n-1}+\langle\nabla_{\partial r}V,\nabla r\rangle\right]\\
&=\mathrm{sn}^2_H\left[-\frac{(m-m_H)^2}{n-1}+\langle\nabla_{\partial r}V,\nabla r\rangle\right]\\
&\le\mathrm{sn}^2_H\langle\nabla_{\partial r}V,\nabla r\rangle.
\end{split}
\end{equation*}
Integrating the above inequality from $0$ to $r$, since $\mathrm{sn}_H(0)=0$, we have
\[
\mathrm{sn}^2_H(r)m(r)\le\mathrm{sn}^2_H(r)m_H(r)+\int^r_0\mathrm{sn}^2_H(t)\langle\nabla_{\partial t}V,\partial t\rangle dt.
\]
Noticing that integration by parts on the last term gives
\[
\int^r_0\mathrm{sn}^2_H(t)\langle\nabla_{\partial t}V,\partial t\rangle dt=\mathrm{sn}^2_H(r)\langle V,\nabla r\rangle -\int^r_0(\mathrm{sn}^2_H(t))'\langle V,\partial t\rangle dt,
\]
we duduce
\begin{equation}\label{simp}
\mathrm{sn}^2_H(r)m_V(r)\le\mathrm{sn}^2_H(r)m_H(r)-\int^r_0(\mathrm{sn}^2_H(t))'\langle V,\partial t\rangle dt.
\end{equation}
By our theorem assumptions we know that
\[
(\mathrm{sn}^2_H(t))'=2\mathrm{sn}_H(t)\mathrm{sn}'_H(t)\ge 0.
\]
Therefore, if $|V|\leq a$, then
\[
\mathrm{sn}^2_H(r)m_V(r)\le\mathrm{sn}^2_H(r)m_H(r)+a\int^r_0(\mathrm{sn}^2_H(t))'dt.
\]
That is,
\[
\mathrm{sn}^2_H(r)m_V(r)\le\mathrm{sn}^2_H(r)m_H(r)+a\,\mathrm{sn}^2_H(r),
\]
which proves the theorem.

To discuss the equality case, suppose that $m_V(r)=m_H(r)+a$ for some $r$ and $|V|\leq a$.
By \eqref{simp}, we have
\[
a\,\mathrm{sn}^2_H(r)\le -\int^r_0(\mathrm{sn}^2_H(t))'\langle V,\partial t\rangle dt\le a\,\mathrm{sn}^2_H(r).
\]
So $V=-a \nabla r$. Therefore,
\[
m(r)=m_V(r)+\langle V,\nabla r \rangle=m_H(r)+a+\langle-a \nabla r,\nabla r \rangle=m_H(r),
\]
which means the rigidity follows from the rigidity for the usual mean curvature comparison.
\end{proof}

\section{Proof of Theorems \ref{Thm} and \ref{Thm2}}\label{sec3}
In this section we will prove Theorems \ref{Thm} and \ref{Thm2}. The proof uses
the generalized mean curvature comparison applied to the excess function. The proof trick
was also used by Wei and Wylie \cite{[WW]} to prove the Myers' type theorem on
smooth metric measure spaces $(M, g,e^{-f}dv)$ when $f$ is bounded.

\begin{proof}[Proof of Theorem \ref{Thm}]
Let $(M,g)$ admits a smooth vector field $V$ such that
\[
\mathrm{Ric}_V \geq (n-1)H\,g,
\]
for some real constant $H>0$, and $|V|\leq a$ for some real constant $a\ge0$.
Let $p_1,p_2$ are two points in $M$ with $d(p_1, p_2) \geq \frac{\pi}{ \sqrt{H}}$ and
set
\[
B:=d(p_1,p_2)-\frac{\pi}{\sqrt{H}}.
\]
Let $r_1(x)=d(p_1,x)$ and $r_2(x)=d(p_2,x)$. Denote $e(x)$ by the excess function for
the points $p_1$ and $p_2$, i.e.
\[
e(x):=d(p_1,x)+d(p_2,x)-d(p_1,p_2),
\]
which measures how much the triangle inequality fails to be an equality. By the
triangle inequality, we have
\[
e(x)\geq 0\quad \mathrm{and}\quad e(\gamma(t))=0,
\]
where $\gamma$ is a minimal geodesic from $p_1$ to $p_2$. Hence
\[
\Delta_V(e) (\gamma(t)) \geq 0
\]
in the barrier sense.
Let
\[
y_1=\gamma\left(\frac{\pi}{2\sqrt{H}}\right)
\quad\mathrm{and}\quad
y_2=\gamma\left(\frac{\pi}{2\sqrt{H}}+B\right).
\]
Then it is easy to see that $r_i(y_i)= \frac{\pi}{2\sqrt{H}}$, $i=1$ and $2$.
Furthermore, by Theorem \ref{meancomp}, we have
\begin{equation}\label{meanest1}
\Delta_V(r_i(y_i))\leq a,\quad i=1\,\,\mathrm{and}\,\,2.
\end{equation}
We would like to point out that we can not give a estimate for $\Delta_V(r_1(y_2))$
by directly using Theorem \ref{meancomp}, since $r_1(y_2)>\frac{\pi}{2\sqrt{H}}$.
But we can apply Theorem \ref{BasicMean} and estimate \eqref{meanest1} to estimate
\begin{equation}\label{meanest2}
\Delta_V(r_1(y_2))\le a-B(n-1)H.
\end{equation}
Therefore,
\begin{equation*}
\begin{split}
0\leq\Delta_V(e)(y_2)
&=\Delta_V(r_1)(y_2)+\Delta_V(r_2)(y_2)\\
&\le 2a-B(n-1)H,
\end{split}
\end{equation*}
where we used estimates \eqref{meanest1} and \eqref{meanest2}. Hence we have
\[
B\le\frac{2a}{(n-1)\sqrt{H}}
\]
and
\[
d(p_1, p_2)\le\frac{\pi}{\sqrt{H}}+\frac{2a}{(n-1)H}.
\]
Since $p_1$ and $p_2$ are arbitrary two points, this completes the proof.
\end{proof}

\vspace{.1in}

In the rest of this section, we now give a short explanation on how to prove
Theorem \ref{Thm2}. Indeed, its proof is almost the same as the case of
Theorem \ref{Thm}: the difference is that we use Wei-Wylie's mean curvature
comparisons (Theorems 1.1 (a) and 3.1 in \cite{[WW]}) instead of Theorems
\ref{BasicMean} and \ref{meancomp}. So we omit it here.

\section{Proof of Corollary \ref{cor}}\label{sec4}
We first recall a Fern\'andez-L\'opez and Garc\'ia-R\'io's result \cite{[FG2]},
which states that the gradient norm of potential function of compact shrinking
Ricci solitons can be controlled by the scalar curvature.

\begin{lemma}\label{lem}
Let $(M,g)$ be an $n$-dimensional compact Riemannian manifold which
admits a smooth function $f$ satisfying
\[
\mathrm{Ric}+\mathrm{Hess}\, f=\lambda\, g
\]
for some real constant $\lambda>0$. Then
\[
|\nabla f|\, \leq\sqrt{R_{max}-R}.
\]
\end{lemma}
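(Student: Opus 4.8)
The plan is to exploit the soliton equation together with a classical Bianchi-type identity for shrinking gradient Ricci solitons. Recall that for a gradient Ricci soliton $\mathrm{Ric}+\mathrm{Hess}\,f=\lambda\,g$ one has the well-known identities $R+\Delta f=n\lambda$ and $R+|\nabla f|^2-2\lambda f=\mathrm{const}$, and differentiating the trace identity one gets $\nabla R=2\,\mathrm{Ric}(\nabla f,\cdot)$, equivalently $\tfrac12\nabla R=\mathrm{Ric}(\nabla f)$. The key auxiliary function will be $u:=R+|\nabla f|^2$. First I would compute $\nabla u$: using $\nabla|\nabla f|^2=2\,\mathrm{Hess}\,f(\nabla f,\cdot)=2(\lambda\,g-\mathrm{Ric})(\nabla f,\cdot)=2\lambda\nabla f-\nabla R$, so $\nabla u=\nabla R+2\lambda\nabla f-\nabla R=2\lambda\nabla f$. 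Hence $u-2\lambda f$ is constant, which is the standard identity, but more usefully it shows $u$ is ``almost'' controlled once we bound things at the extrema of $f$.

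Next I would locate the maximum of $u=R+|\nabla f|^2$ on the compact manifold $M$. Since $\nabla u=2\lambda\nabla f$ and $\lambda>0$, critical points of $u$ coincide with critical points of $f$; at a point $x_0$ where $f$ attains its maximum (which exists by compactness), $\nabla f(x_0)=0$, so $u(x_0)=R(x_0)$. Moreover, because $u=2\lambda f+\mathrm{const}$, the maximum of $u$ is attained exactly where $f$ is maximal, so $\max_M u=R(x_0)$ where $x_0=\arg\max f$. I would then argue $R(x_0)\le R_{max}$ trivially, giving $R+|\nabla f|^2=u\le \max_M u=R(x_0)\le R_{max}$ at every point, whence $|\nabla f|^2\le R_{max}-R$, which is exactly the claimed bound. (One should double-check the direction: at the maximum point of $f$ we also want $\mathrm{Hess}\,f\le 0$, so from the soliton equation $\mathrm{Ric}(x_0)=\lambda g-\mathrm{Hess}\,f(x_0)\ge \lambda g>0$; this is consistent but not strictly needed for the inequality itself.)

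The main obstacle, and the point requiring care, is justifying that $\max_M u$ is achieved at a critical point of $f$ and equals $R$ there — this is immediate from $u=2\lambda f+c$ with $\lambda>0$, so $u$ and $f$ have the same maximizers, but I want to be sure the constant bookkeeping is right and that no boundary issues arise (none, since $M$ is compact and closed). A secondary point is simply recalling and citing the standard soliton identities $\nabla R=2\,\mathrm{Ric}(\nabla f)$ and $R+\Delta f=n\lambda$; these are classical (Hamilton) and I would invoke them directly rather than rederive them, though the derivation of $\nabla u=2\lambda\nabla f$ via $\nabla|\nabla f|^2=2\,\mathrm{Hess}\,f(\nabla f)$ and the soliton equation is short enough to include. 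Thus the proof is essentially: (i) establish $\nabla(R+|\nabla f|^2)=2\lambda\nabla f$; (ii) conclude $R+|\nabla f|^2-2\lambda f\equiv c$; (iii) evaluate at a maximum point of $f$ to get $c=R(x_0)-2\lambda f(x_0)$ with $\nabla f(x_0)=0$; (iv) substitute back and use $R(x_0)\le R_{max}$ together with $f\le f(x_0)$ to obtain $R+|\nabla f|^2\le R_{max}$, i.e. $|\nabla f|\le\sqrt{R_{max}-R}$.
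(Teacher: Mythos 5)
Your argument is correct: the identity $R+|\nabla f|^2-2\lambda f=\mathrm{const}$ (from Hamilton's identity $\nabla R=2\,\mathrm{Ric}(\nabla f,\cdot)$ and the soliton equation), evaluated at a maximum point of $f$ where $\nabla f=0$, gives $R+|\nabla f|^2\le R_{max}$ everywhere, which is the claimed bound. The paper itself gives no proof of this lemma, quoting it from Fern\'andez-L\'opez and Garc\'ia-R\'io \cite{[FG2]}, and your argument is essentially the standard one from that source.
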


Using Lemma \ref{lem}, we can prove Corollary \ref{cor}.
\begin{proof}[Proof of Corollary \ref{cor}]
In Theorem \ref{Thm}, let $V=\nabla f$, and then
\[
a=\sqrt{R_{max}-R_{min}}.
\]
Meanwhile we let $(n-1)H=\lambda$. Substituting these into \eqref{newest}
proves the estimate.
\end{proof}


\bibliographystyle{amsplain}

\end{document}